\definecolor{grey}{gray}{0.6}
\newcounter{intege}
\newcommand{\ngon}[8]{ 

  \foreach \t in {1,...,#1} {
    \coordinate (#7\t) at ($#2+(#8-\t*360/#1:#3)$);
  }
  \setcounter{intege}{1}
  \pgfmathsetcounter{intege}{1}
  \foreach \object/\z in {#6}{
    \node at (#7\theintege) {\z};
    \pgfmathsetcounter{intege}{\theintege+1}
    \setcounter{intege}{\theintege}
  }
  \foreach \x/\y/\z in {#4}{
    \draw[thick,\z,shorten <=#5pt, shorten >=#5pt] {(#7\x)--(#7\y)};
  }
  \foreach \t in {1,...,#1} {
    \coordinate (#7\t) at ($#2+(#8-\t*360/#1:1.15*#3)$);
  }
  \setcounter{intege}{1}
  \pgfmathsetcounter{intege}{1}
  \foreach \object/\z in {#6}{
    \node[inner sep=0pt] at (#7\theintege) {$\object$};
    \pgfmathsetcounter{intege}{\theintege+1}
    \setcounter{intege}{\theintege}
  }
}
\def\x{$\bullet$}
\def\o{$\circ$}
\title[The Cyclic Sieving Phenomenon for non-crossing forests]{The Cyclic Sieving Phenomenon for non-crossing forests}
\author{Stefan Kluge}
\email{kluge.ish@web.de}
\keywords{cyclic sieving, non-crossing forests}
\newtheorem{thm}{Theorem}[section]
\newtheorem{lem}[thm]{Lemma}
\newtheorem{map}[thm]{Mapping}
\theoremstyle{definition}
\newtheorem{dfn}[thm]{Definition}
\theoremstyle{remark}
\newtheorem*{eg*}{Example}
\newcommand{\Set}[1]{\ensuremath{\mathcal{#1}}}  
\newcommand{\Dfn}[1]{\emph{#1}}                  
\newcommand{\size}[1]{\left\lvert #1\right\rvert}
\newcommand{\qi}[2][q]{[#2]_{#1}}
\newcommand{\qbinom}[3][q]{\genfrac{[}{]}{0pt}{}{#2}{#3}_{#1}}
\DeclareMathOperator{\first}{first}
\DeclareMathOperator{\last}{last}
\begin{document}
\maketitle

\begin{abstract}
	In this paper we prove that the set of non-crossing forests together with a cyclic group acting on it by rotation and a natural q-analogue of the formula for their number exhibits the cyclic sieving phenomenon, as conjectured by Alan Guo.
\end{abstract}

\section{Introduction}
\label{sec:introduction}

\subsection{Non-crossing trees and forests}
	A \Dfn{non-crossing graph} on $n$ vertices is a graph whose vertices are arranged on a circle and whose edges are straight line segments that do not cross. A tree is a connected acyclic graph and a forest is an acyclic graph or, alternatively, a graph whose components are trees. The number of non-crossing forests on $n$ vertices with $k$ components is
	\begin{equation}
	\label{eqn:numForests}
		f_{n,k}=\frac{1}{2n-k}\binom{n}{k-1}\binom{3n-2k-1}{n-k}.
	\end{equation}
	A proof using Lagrange inversion can be found in \cite{FlajoletNoy}.

\begin{figure}
  \centering
\begin{tikzpicture}[auto]
  \coordinate (allo) at (0,0);
  \ngon{12}{(allo)}{2.5}
  {1/2/black,1/8/black,3/7/black,4/7/black,9/11/black}
  {2}{1/\x,2/\o,3/\o,4/\o,5/\o,6/\o,7/\o,8/\o,9/\o,10/\o,11/\o,12/\o}
  {first}{120}
\end{tikzpicture} 
\caption{A non-crossing forest on $n=12$ vertices with $k=7$
  components.}
  \label{fig:f2}
\end{figure}
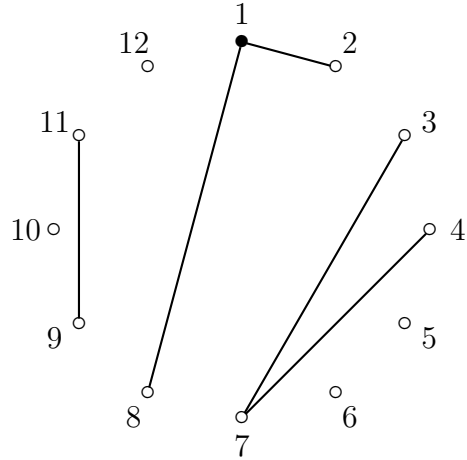

	We obtain a natural q-analogue of ~\eqref{eqn:numForests}	by replacing integers, factorials and binomial coefficients in $f_{n,k}$ by their q-analogues:
	\begin{equation}
		f_{n,k}(q)=\frac{1}{\qi{2n-k}}\qbinom{n}{k-1}\qbinom{3n-2k-1}{n-k},
	\end{equation}
	where
	$$\qi{n}=1+q+q^2+\cdots q^{n-1}=\frac{1-q^n}{1-q}$$
	$$\qi{n}!=\qi{n}\qi{n-1}\cdots\qi{1}$$
	$$\qbinom{n}{k}=\frac{\qi{n}!}{\qi{k}!\qi{n-k}!}.$$

\subsection{Cyclic Sieving Phenomenon}
In 2004 Victor Reiner, Denis Stanton and Dennis White introduced the cyclic sieving phenomenon in ~\cite{RSW}. To define this concept we need a finite set $\Set X$, a cyclic group $C$ of order $n$ acting on $\Set X$ and a polynomial with nonnegative integer coefficients $X(q)$.

\begin{dfn}
\label{dfn:CSP}
	The triple $(\Set X,C,X(q))$ exhibits the \Dfn{cyclic sieving phenomenon} if for all $c\in C$ we have
	$$X(\omega_{o(c)})=\size{\Set X^c},$$
	where $o(c)$ denotes the order of $c$ in $C$, $\omega_d$ is a $d$\textsuperscript{th} primitive root of unity and
	$\Set X^c=\left\{x\in \Set X:c(x)=x\right\}$ denotes the set of fixed points of $\Set X$ under the action of $c\in C$.
\end{dfn}

\begin{dfn}
$\Set F_{n,k}$ is the set of non-crossing forests on $n$ vertices with $k$ components. $\Set F_{n,k}^d$ is the subset of $\Set F_{n,k}$ which contains forests which are invariant under rotation by $\frac{2\pi}{d}$ which we call \Dfn{$d$-invariant} forests.
\end{dfn}

	The following statement is the main theorem of this article. It was conjectured by Alan Guo \cite{Guo}.

\begin{thm}
\label{thm:CSPForests}
  Let $\langle\rho\rangle$ be the cyclic group of order $n$ acting on $\Set F_{n,k}$ by rotation and the polynomial $f_{n,k}(q)$ defined as above. Then $\left(\Set{F}_{n,k},\langle\rho\rangle,f_{n,k}(q)\right)$ exhibits the cyclic sieving phenomenon.
\end{thm}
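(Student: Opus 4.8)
The plan is to verify the defining equality $f_{n,k}(\omega_d) = |\Set{F}_{n,k}^d|$ for every $d$ dividing $n$, treating the two sides independently and then matching them. For the polynomial side, the standard tool is the $q$-Lucas theorem for $q$-binomial coefficients evaluated at roots of unity: if $d \mid n$, write each integer in base $d$, so that $\binom{a}{b}_q$ at $q = \omega_d$ factors as $\binom{\lfloor a/d\rfloor}{\lfloor b/d\rfloor}\cdot\binom{a \bmod d}{b \bmod d}_{\omega_d}$. Applying this to the two $q$-binomials in $f_{n,k}(q)$ and handling the factor $1/\qi{2n-k}$ by absorbing it appropriately (replacing $\frac{1}{\qi{2n-k}}\binom{n}{k-1}_q$ by a $q$-analogue with a cleaner cyclic structure, e.g. rewriting $f_{n,k}(q)$ as a product that makes the $\frac{1}{[2n-k]}$ cancel against one of the binomials), I would compute $f_{n,k}(\omega_d)$ in closed form. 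I expect the answer to come out as an ordinary product of binomial coefficients — morally $f_{n/d,\,?}$ times a small correction — which we must then interpret combinatorially.

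For the combinatorial side, I would set up a bijective/enumerative description of the $d$-invariant non-crossing forests $\Set{F}_{n,k}^d$. A $d$-invariant non-crossing forest is determined by a ``fundamental domain'': the $C_d$-action partitions the $n$ vertices into $n/d$ orbits, and the non-crossing condition forces the edges crossing between sectors to have a very restricted shape (at most a ``central'' structure fixed by rotation, plus one transversal edge per sector repeated $d$ times). Concretely I would argue that a $d$-invariant forest decomposes into a rotationally symmetric ``core'' near the center together with $d$ congruent copies of a non-crossing forest-like object living in one sector. Counting the choices for the core (which edges, if any, are $d$-invariant — this interacts with whether $d$ is even/odd and with the parity of how many components pass through the center) and for the sector piece, and checking the acyclicity and component-count bookkeeping, should yield a product formula for $|\Set{F}_{n,k}^d|$.

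The main obstacle, and where most of the work lies, is the combinatorial enumeration of $\Set{F}_{n,k}^d$: one must carefully characterize which symmetric configurations near the center of the disk are legal (acyclic, non-crossing, correct number of components after identification), and then reduce the sector count to a smaller instance of the same counting problem — essentially proving a recursion $|\Set{F}_{n,k}^d| = \sum (\text{core contribution})\cdot f_{n/d,\,k'}$-type identity. Getting the component count $k$ to transform correctly under this reduction (a single component may wrap around the center and be counted once globally but touch every sector) is the delicate point. Once both sides are in closed product form, the final step is a routine algebraic identity: matching the $q$-Lucas output of $f_{n,k}(\omega_d)$ against the combinatorial product, checking agreement case-by-case according to $\gcd$ conditions between $d$, $n$, and $k$. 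I would organize the paper as: (1) a section on $q$-analogue manipulations and the root-of-unity evaluation; (2) a section developing the structure theory of $d$-invariant non-crossing forests; (3) the enumeration yielding $|\Set{F}_{n,k}^d|$; (4) the final comparison.
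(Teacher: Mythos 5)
Your plan for the polynomial side is essentially the paper's: apply the $q$-Lucas theorem after rewriting $f_{n,k}(q)$ so that the factor $\frac{1}{\qi{2n-k}}$ can be controlled, and the evaluation indeed comes out as $(n'-k'+1)f_{n',k'}$ when $d\mid k$, as a product of two binomials when $d=2$ and $k$ is odd, and as $0$ otherwise. The gap is on the combinatorial side, which you yourself flag as ``where most of the work lies'' but then leave unresolved, and moreover the structural picture you propose is not correct. For $d\ge 3$ there is no ``rotationally symmetric core near the center'': a tree in a non-crossing forest cannot be carried to itself by a rotation of order $d\ge 3$, because a finite-order automorphism of a tree fixes a vertex or inverts an edge, and no boundary vertex is fixed by a nontrivial rotation while edge-inversion forces order $2$. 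Hence every tree is mapped to a vertex-disjoint copy of itself, which immediately gives $d\mid k$ (so $\Set F_{n,k}^d=\emptyset$ when $d\nmid k$, except for $d=2$, $k$ odd, where exactly one tree is centrally symmetric). This disjointness, combined with non-crossing, is precisely what lets one cut the circle into a fundamental arc of $n'=n/d$ consecutive vertices with \emph{no} edges leaving it at all --- there is no ``core plus one transversal edge per sector'' to enumerate. The paper then gets the count $(n'-k'+1)f_{n',k'}$ by a bijection with pairs (forest on $n'$ vertices, choice of a ``good'' vertex marking where the fundamental arc starts relative to the base vertex), and handles $d=2$, $k$ odd separately by contracting the central tree's diameter edge, with the marked vertex or marked edge-plus-endpoint accounting for the factor $3n'-2k'$. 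None of this structure theory, nor the resulting bijections, is present in your proposal; the delicate point you worry about (a component wrapping around the center and touching every sector) is exactly the dichotomy settled by the lemma above, and without it your recursion-style count does not get off the ground.

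A second, smaller omission: the cyclic sieving phenomenon as defined requires $f_{n,k}(q)$ to be a polynomial with nonnegative integer coefficients, which is not obvious from the expression $\frac{1}{\qi{2n-k}}\qbinom{n}{k-1}\qbinom{3n-2k-1}{n-k}$. The paper proves this separately (showing every root-of-unity zero of $\qi{2n-k}$ is matched by a zero of the numerator, then invoking symmetry and unimodality of products of $q$-binomials together with a result of Reiner--Stanton--White); your proposal should include such an argument or cite one.
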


In the following sections we verify this by direct calculation of the left and right hand side in the condition of Definition~\ref{dfn:CSP}.

\section{the left hand side: evaluating the polynomial}
\label{sec:qEval}
This section is devoted to the proof of the following Lemma which gives the evaluation of $f_{n,k}(q)$ at roots of unity.

\begin{lem}
\label{lem:qEval}
Suppose $d\mid n$. And let $\omega$ be a $d$\textsuperscript{th} primitive root of unity
\begin{itemize}
	\item If $d=1$, then $f_{n,k}(\omega)=f_{n,k}$.
	\item If $d\ge 2$, $d\mid k$, let $n'=\frac{n}{d}$, $k'=\frac{k}{d}$, then\\
	$f_{n,k}(\omega)=\left(n'-k'+1\right)f_{n',k'}$.
	\item If $d=2$, $k$ odd, let $n'=\frac{n}{d}$, $k'=\frac{k+1}{d}$, then\\
	$f_{n,k}(\omega)=\binom{n'}{k'-1}\binom{3n'-2k'}{n'-k'}$.
	\item Otherwise $f_{n,k}(\omega)=0$.
\end{itemize}
\end{lem}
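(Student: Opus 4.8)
The tool I would reach for is the standard $q$-Lucas theorem (also called the cyclotomic evaluation of $q$-binomials): if $d \ge 2$ and $\omega$ is a primitive $d$-th root of unity, then for nonnegative integers $a,b$ writing $a = a_1 d + a_0$, $b = b_1 d + b_0$ with $0 \le a_0, b_0 < d$,
$$\qbinom{a}{b}\Big|_{q=\omega} = \binom{a_1}{b_1}\qbinom[\omega]{a_0}{b_0}.$$
Also $\qi{m}|_{q=\omega} = 0$ exactly when $d \mid m$, and when $d \nmid m$, $\qi{m}|_{q=\omega}$ is a nonzero explicit root-of-unity expression (equal to $\frac{1-\omega^m}{1-\omega}$). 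The plan is to apply this to each of the three $q$-factors of
$$f_{n,k}(q) = \frac{1}{\qi{2n-k}}\qbinom{n}{k-1}\qbinom{3n-2k-1}{n-k}$$
and carefully track which residues mod $d$ appear in $2n-k$, in the ``top/bottom'' of each $q$-binomial, and whether numerator and denominator zeros cancel.

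The first bullet ($d=1$) is immediate since $\omega = 1$ makes every $q$-integer an ordinary integer. For $d \ge 2$ I would split on the residue of $k$ mod $d$, using that $d \mid n$ throughout. \textbf{Case $d \mid k$:} then $d \mid 2n-k$, so the denominator $\qi{2n-k}$ vanishes; but $d \mid n$ and $d\mid k-1$ fails, so I must be more careful — actually $k-1 \equiv -1$, $n \equiv 0$, $3n-2k-1 \equiv -1$, $n-k \equiv 0 \pmod d$. Applying $q$-Lucas, the residual $q$-binomials are $\qbinom[\omega]{0}{d-1}$ and $\qbinom[\omega]{d-1}{0}$; the first is $0$ unless $d-1 = 0$, which is false for $d \ge 2$ — so naively the whole expression looks like $0/0$. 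The resolution is that one should not split $f_{n,k}(q)$ multiplicatively before cancelling; instead rewrite $f_{n,k}(q)$ as a genuine polynomial (it is one, by the product formula interpretation) or group $\frac{1}{\qi{2n-k}}\qbinom{n}{k-1}$ into a single $q$-analogue of $\frac{1}{n}\binom{n}{k-1}\cdot\frac{n}{2n-k}$-type expression, and similarly fold factors so that each surviving $q$-factorial quotient has matching $d$-divisibility of zeros in numerator and denominator. Carrying out this bookkeeping — matching up the $\lfloor \cdot/d\rfloor$ parts and showing the leftover primitive-root factors combine to the stated $(n'-k'+1)f_{n',k'}$ with $n'=n/d$, $k'=k/d$ — is the technical heart of the argument. \textbf{Case $d=2$, $k$ odd:} here $2n-k$ is odd so the denominator is a nonzero factor $\qi{2n-k}|_{-1}$; $n$ is even, $k-1$ even, $3n-2k-1$ odd, $n-k$ odd, so $q$-Lucas gives $\binom{n/2}{(k-1)/2}\qbinom[-1]{0}{0} = \binom{n/2}{(k-1)/2}$ and $\binom{(3n-2k-1-1)/2}{(n-k-1)/2}\qbinom[-1]{1}{1} = \binom{(3n-2k-2)/2}{(n-k-1)/2}$. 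Setting $n' = n/2$, $k' = (k+1)/2$ and simplifying (including the scalar $\qi{2n-k}|_{-1} = \pm 1$, which one checks is $+1$ after accounting for signs elsewhere) should collapse to $\binom{n'}{k'-1}\binom{3n'-2k'}{n'-k'}$. \textbf{Otherwise:} if $d \ge 3$ and $d \nmid k$, or $d = 2$ and $k$ even but $d \nmid k$ (impossible), the argument is that some $q$-binomial picks up a vanishing residual factor $\qbinom[\omega]{a_0}{b_0}$ with $b_0 > a_0$ that is genuinely not cancelled by the denominator, forcing $f_{n,k}(\omega) = 0$; I'd verify the non-cancellation by comparing the multiplicity of the cyclotomic polynomial $\Phi_d(q)$ in numerator versus denominator of $f_{n,k}(q)$ and showing it is strictly positive.

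\textbf{Main obstacle.} The delicate point is precisely the $0/0$ phenomenon flagged above: $f_{n,k}(q)$ is presented as a ratio in which, at $q = \omega$, both a numerator $q$-binomial and the denominator $\qi{2n-k}$ can vanish, so one cannot blindly apply $q$-Lucas factor-by-factor. The clean way to handle this is to track the exact power of $\Phi_d(q)$ dividing $\qi{m}$ — namely $\Phi_d(q) \mid \qi{m}$ iff $d \mid m$, and then at most once — and hence compute $v_{\Phi_d}(f_{n,k}(q))$ as an alternating sum of floor functions over the $q$-factorials making up the two $q$-binomials and the denominator; showing this valuation is $0$ in the two non-vanishing cases (so we may evaluate) and $\ge 1$ otherwise (giving $0$), and then in the non-vanishing cases reading off the value via $q$-Lucas applied to the genuine polynomial $f_{n,k}(q)\cdot\text{(unit)}$. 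The remaining work — simplifying the resulting products of ordinary binomials into the closed forms $(n'-k'+1)f_{n',k'}$ and $\binom{n'}{k'-1}\binom{3n'-2k'}{n'-k'}$ — is routine manipulation of binomial coefficients using $n = dn'$, $k = dk'$ (resp. $k = 2k'-1$).
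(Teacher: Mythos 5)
Your overall toolkit ($q$-Lucas, the zero pattern of $\qi{m}$ at $\omega$, case split on $k \bmod d$) is the same as the paper's, and the easy parts of your plan are sound: the case $d=1$ is definitional; for $d=2$, $k$ odd, the denominator satisfies $\qi[\omega]{2n-k}=1$ (no sign bookkeeping is needed: $[m]_{-1}=1$ for odd $m$) and factorwise $q$-Lucas gives exactly $\binom{n'}{k'-1}\binom{3n'-2k'}{n'-k'}$; and in the remaining cases with $d\nmid k$ the denominator does not vanish at all (since $d\mid n$ forces $2n-k\equiv -k\not\equiv 0$), so your strategy of exhibiting a vanishing residual $q$-binomial suffices there, provided you separate the subcases $d\nmid k-1$ (zero from $\qbinom[\omega]{n}{k-1}$) and $d\mid k-1$, $d\ge 3$ (zero from the second $q$-binomial, whose residuals are $(-3)\bmod d$ over $d-1$); the paper reaches the same conclusion by a slightly different factor-extraction, but your route works.

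The genuine gap is the central case $d\ge 2$, $d\mid k$, the one producing $(n'-k'+1)f_{n',k'}$. You correctly diagnose the $0/0$ problem, but your proposed cure --- computing the $\Phi_d$-valuation and then ``reading off the value via $q$-Lucas applied to $f_{n,k}(q)\cdot(\text{unit})$'' --- is not an argument yet: knowing the valuation is zero tells you the limit exists, but $q$-Lucas evaluates $q$-binomial coefficients, not the rational expression as a whole, so you still need an explicit regrouping in which the simple zero of $\qi{2n-k}$ is cancelled against a concrete numerator factor before evaluating anything. This is precisely the step the paper supplies and you defer as ``bookkeeping'': write $\qbinom{n}{k-1}=\frac{\qi{k}}{\qi{n+1}}\qbinom{n+1}{k}$, so that $f_{n,k}(q)=\frac{\qi{k}}{\qi{2n-k}}\cdot\frac{1}{\qi{n+1}}\qbinom{n+1}{k}\qbinom{3n-2k-1}{n-k}$; at $q=\omega$ the ratio $\frac{\qi[\omega]{k}}{\qi[\omega]{2n-k}}$ equals $\frac{k}{2n-k}=\frac{k'}{2n'-k'}$ because both indices are divisible by $d$ (Lemma~\ref{lem:qFacts}), $\qi[\omega]{n+1}=1$, and $q$-Lucas gives $\binom{n'}{k'}\binom{3n'-2k'-1}{n'-k'}$, which simplifies to $(n'-k'+1)f_{n',k'}$ via $\binom{n'}{k'}=\frac{n'-k'+1}{k'}\binom{n'}{k'-1}$. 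Without this (or an equivalent) explicit cancellation and the ensuing binomial simplification, the main evaluation in the lemma is asserted rather than proved.
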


A useful tool for the evaluation of q-binomial coefficients at roots of unity is the q-Lucas theorem:
\begin{lem}[$q$-Lucas theorem]
\label{lem:qLucas}
	Let a and b be nonnegative integers and $\omega$ a $d$\textsuperscript{th} primitive root of unity. Then
	$$\qbinom[\omega]{a}{b}=
		\binom{\lfloor \frac{a}{d} \rfloor} {\lfloor \frac{b}{d} \rfloor}
		\qbinom[\omega]{a-d\lfloor \frac{a}{d} \rfloor} {b-d\lfloor \frac{b}{d} \rfloor}.
	$$
	In particular, if $d$ divides $b$ then
	$$
		\qbinom[\omega]{a}{b}=\binom{\lfloor \frac{a}{d} \rfloor}{\frac{b}{d}}.
	$$
\end{lem}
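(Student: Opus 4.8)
The plan is to prove the identity by a generating-function argument, using the $q$-binomial theorem in the form
$$\prod_{i=0}^{m-1}(1+q^{i}x)=\sum_{j\ge 0}q^{\binom{j}{2}}\qbinom{m}{j}x^{j},$$
which I would take as a known identity of polynomials in $q$ and $x$. Specialise $q$ to the primitive $d$\textsuperscript{th} root of unity $\omega$. The crucial observation is that a run of $d$ consecutive factors collapses: since $1,\omega,\dots,\omega^{d-1}$ are exactly the $d$\textsuperscript{th} roots of unity, $\prod_{i=0}^{d-1}(t-\omega^{i})=t^{d}-1$, and substituting $t=-1/x$ and clearing denominators gives
$$\prod_{i=0}^{d-1}(1+\omega^{i}x)=1-(-1)^{d}x^{d}.$$

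Now write $a=d\alpha+r$ and $b=d\beta+s$ with $0\le r,s<d$, so $\alpha=\lfloor a/d\rfloor$, $\beta=\lfloor b/d\rfloor$, and $r,s$ are the residues appearing in the statement. Grouping the $a$ factors of $\prod_{i=0}^{a-1}(1+\omega^{i}x)$ into $\alpha$ full runs of length $d$ and one partial run of length $r$, the collapse identity yields
$$\prod_{i=0}^{a-1}(1+\omega^{i}x)=\bigl(1-(-1)^{d}x^{d}\bigr)^{\alpha}\ \prod_{i=0}^{r-1}(1+\omega^{i}x).$$
Next I would compare the coefficient of $x^{b}$ on both sides. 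By the $q$-binomial theorem the left-hand side has coefficient $\omega^{\binom{b}{2}}\qbinom[\omega]{a}{b}$. On the right-hand side, the first factor contributes only powers $x^{dj}$ and the second only powers $x^{m}$ with $0\le m\le r<d$; since $b=d\beta+s$ with $0\le s<d$, a residue count modulo $d$ shows the unique contributing term is $j=\beta$, $m=s$ (and if $s>r$ there is no term, consistent with $\qbinom[\omega]{r}{s}=0$), giving $\binom{\alpha}{\beta}\bigl(-(-1)^{d}\bigr)^{\beta}\omega^{\binom{s}{2}}\qbinom[\omega]{r}{s}$. Dividing by $\omega^{\binom{b}{2}}$ leaves
$$\qbinom[\omega]{a}{b}=(-1)^{\beta(d+1)}\,\omega^{\binom{s}{2}-\binom{b}{2}}\,\binom{\alpha}{\beta}\,\qbinom[\omega]{r}{s}.$$

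The one delicate point — and the main thing to get right beyond this bookkeeping — is that the prefactor $(-1)^{\beta(d+1)}\omega^{\binom{s}{2}-\binom{b}{2}}$ equals $1$. A short computation gives $\binom{b}{2}-\binom{s}{2}=\tfrac12\,d\beta(d\beta+2s-1)$, and I would split into two cases. If $d$ is even, then $d\beta+2s-1$ is odd and $\omega^{d/2}=-1$, so $\omega^{\binom{s}{2}-\binom{b}{2}}=(\omega^{d/2})^{-\beta(d\beta+2s-1)}=(-1)^{\beta}$, which cancels $(-1)^{\beta(d+1)}=(-1)^{\beta}$. If $d$ is odd, then $(-1)^{\beta(d+1)}=1$, while $\beta(d\beta+2s-1)\equiv\beta(\beta+1)\equiv 0\pmod 2$, so $\binom{b}{2}-\binom{s}{2}$ is an integer multiple of $d$ and $\omega^{\binom{s}{2}-\binom{b}{2}}=1$. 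Either way the prefactor is $1$, and we obtain $\qbinom[\omega]{a}{b}=\binom{\lfloor a/d\rfloor}{\lfloor b/d\rfloor}\qbinom[\omega]{a-d\lfloor a/d\rfloor}{b-d\lfloor b/d\rfloor}$, as claimed; the ``in particular'' clause is the special case $s=0$, where $\qbinom[\omega]{r}{0}=1$. (An alternative, if one prefers not to invoke the $q$-binomial theorem, is induction on $a$ via the $q$-Pascal recurrence $\qbinom{a}{b}=\qbinom{a-1}{b-1}+q^{b}\qbinom{a-1}{b}$, distinguishing cases by the residue of $a$ modulo $d$; but the generating-function proof above is cleaner.)
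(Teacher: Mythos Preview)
Your proof is correct. The collapse identity $\prod_{i=0}^{d-1}(1+\omega^{i}x)=1-(-1)^{d}x^{d}$ is derived cleanly, the coefficient extraction is right, and the case analysis showing the prefactor $(-1)^{\beta(d+1)}\omega^{\binom{s}{2}-\binom{b}{2}}$ equals $1$ is accurate in both parities of $d$.

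As for comparison: the paper does not actually prove this lemma but merely cites Sagan's paper \emph{Congruence properties of $q$-analogs}. Your argument via the $q$-binomial theorem and the factorisation over a full period of roots of unity is precisely the standard proof, and is in fact essentially the argument given in that reference. So you have supplied, self-contained, what the paper outsources; the only thing the citation buys is brevity, while your version makes the paper independent of the external source for this particular lemma.
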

\begin{proof}
A proof can be found in ~\cite{Sagan}.
\end{proof}

We will also frequently use the following Lemma whose proof is a straightforward calculation.
\begin{lem}
\label{lem:qFacts}
	Let $a$ and $b$ be positive integers and $\omega$ a $d$\textsuperscript{th} primitive root of unity. Then
	\begin{enumerate}
		\item $\qi{a}$ has a simple zero at $\omega$ if and only if $d\ne 1$ and $d\mid a$.
		\item $\qi[\omega]{a}=1$ if $a\equiv 1\bmod d$. 
		\item if $a\equiv b\bmod d$ then
			$$\frac{\qi[\omega]{a}}{\qi[\omega]{b}}=
			\begin{cases}
				&\text{$\frac{a}{b}$ if $a\equiv b\equiv 0\bmod d$}\\
				&\text{1 if $a\equiv b \not\equiv 0\bmod d$}\\
			\end{cases}$$
	\end{enumerate}
\end{lem}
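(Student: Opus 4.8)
The plan is to derive all three items directly from the closed form $\qi{a}=\frac{1-q^a}{1-q}$, using nothing more than the fact that $\omega$ has multiplicative order exactly $d$. Throughout I would assume $d\ge 2$; the case $d=1$ means $\omega=1$ and $\qi{a}(1)=a$, so (1) holds because the condition ``$d\ne 1$'' fails, (3) reduces to the trivial identity $\qi{a}(1)/\qi{b}(1)=a/b$, and (2) as literally stated is only correct when $d\ge 2$ (it is only ever invoked in that range).

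For item (1): over $\mathbb{C}$ the polynomial $q^a-1$ factors as $\prod_{j=0}^{a-1}(q-\zeta^j)$ with $\zeta=e^{2\pi i/a}$, and these $a$ roots are pairwise distinct, so $q^a-1$ — hence $1-q^a$ — is squarefree. Cancelling the factor $1-q$ shows that $\qi{a}(q)=\frac{1-q^a}{1-q}$ has exactly the $a$\textsuperscript{th} roots of unity other than $1$ as its roots, each of them simple. Now $\omega^a=1$ if and only if $d\mid a$ (since $\omega$ has order $d$), and $\omega\ne 1$ if and only if $d\ne 1$; this is precisely the claimed equivalence, and when it holds the zero is simple. (Equivalently, differentiating $(1-q)\qi{a}(q)=1-q^a$ and evaluating at $\omega$ gives $(1-\omega)\qi{a}'(\omega)=-a\omega^{a-1}\ne 0$, so $\qi{a}'(\omega)\ne 0$.)

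For item (3): as rational functions $\dfrac{\qi{a}}{\qi{b}}=\dfrac{1-q^a}{1-q^b}$. If $a\equiv b\not\equiv 0\bmod d$, then by (1) neither $\qi{a}$ nor $\qi{b}$ vanishes at $\omega$, so the ratio is literally defined there; and $\omega^{a-b}=1$ forces $\omega^a=\omega^b$, whence the value is $1$. If instead $a\equiv b\equiv 0\bmod d$, write $a=da'$, $b=db'$ and observe
$$\frac{\qi{a}}{\qi{b}}=\frac{1-(q^d)^{a'}}{1-(q^d)^{b'}}=\frac{\qi[q^d]{a'}}{\qi[q^d]{b'}},$$
which at $q=\omega$ (so $q^d=1$) equals $\dfrac{a'}{b'}=\dfrac{a}{b}$. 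Item (2) is then just the case $b=1$ of this second alternative, since $\qi{1}=1$ and $1\not\equiv 0\bmod d$ for $d\ge 2$; alternatively one writes $a=md+1$ directly and gets $\qi[\omega]{a}=\frac{1-\omega^{md+1}}{1-\omega}=\frac{1-\omega}{1-\omega}=1$.

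There is no substantial obstacle here — as advertised it is a straightforward calculation. The one point that genuinely needs care is the indeterminate $0/0$ form of the ratio in (3) when $d\mid a$ and $d\mid b$: one must first cancel the common vanishing factor $1-q^d$ (equivalently, read the ratio as $\lim_{q\to\omega}$) before substituting $q=\omega$, which is exactly what the rewriting in terms of $\qi[q^d]{a'}$ and $\qi[q^d]{b'}$ accomplishes.
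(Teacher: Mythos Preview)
Your argument is correct and complete; the paper itself omits any proof of this lemma, simply declaring it ``a straightforward calculation,'' and what you have written is exactly such a calculation. The only quibble is phrasing: when you say ``Item (2) is then just the case $b=1$ of this second alternative,'' you presumably mean the alternative $a\equiv b\not\equiv 0\bmod d$ (the one yielding the value $1$), which is the \emph{first} case you treated in your proof of (3), though the second case listed in the lemma statement --- either way the logic is fine.
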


\begin{proof}[Proof of Lemma~\ref{lem:qEval}]
The case $d=1$ follows directly from the definition of the $q$-analogues. For $d\ge 2$ the evaluation of $f_{n,k}(q)$ at $\omega$ splits in two subcases.

\subsection*{The case where $d$ divides $k$}
	Let $n'=\frac{n}{d}$ and $k'=\frac{k}{d}$. Then
	\begin{align*}
		f_{n,k}(\omega)
			&=\frac{1}{\qi[\omega]{2n-k}}
				\qbinom[\omega]{n}{k-1}
				\qbinom[\omega]{3n-2k-1}{n-k}\\
			&=\frac{\qi[\omega]{k}}{\qi[\omega]{2n-k}}
				\frac{1}{\qi[\omega]{n+1}}
				\qbinom[\omega]{n+1}{k}
				\qbinom[\omega]{3n-2k-1}{n-k}\\
	\end{align*}
	Applying the $q$-Lucas Theorem~\ref{lem:qLucas} to the $q$-binomial coefficients and the facts from Lemma~\ref{lem:qFacts} to the factor in front, we obtain
	\begin{align*}
		&\frac{k'}{2n'-k'}
			\binom{n'}{k'}
			\binom{3n'-2k'-1}{n'-k'}\\
		&=(n'-k'+1)
			\frac{1}{2n'-k'}
			\binom{n'}{k'-1}
			\binom{3n'-2k'-1}{n'-k'}\\
		&=(n'-k'+1)f_{n',k'}\\
	\end{align*}
	
\subsection*{The case where $d$ does not divide $k$}
	We have to distinguish two subcases, depending on whether $d$ divides $k-1$ or not.	If $d$ does not divide $k-1$ applying the $q$-Lucas Theorem to the first $q$-binomial coefficient we obtain
	$$
		\qbinom[\omega]{n}{k-1}=\binom{\frac{n}{d}}{\lfloor\frac{k-1}{d}\rfloor}\qbinom[\omega]{0}{k-1-d\lfloor\frac{k-1}{d}\rfloor}=0,
	$$
	because $k-1-d\lfloor\frac{k-1}{d}\rfloor\ne 0$. Since $\qi[\omega]{2n-k}\ne 0$ the polynomial vanishes at $\omega$.

	If $d$ divides $k-1$ the $q$-Lucas Theorem shows that
	$$\qbinom[\omega]{n}{k-1}=\binom{\frac{n}{d}}{\frac{k-1}{d}}.$$
	We can rewrite the remaining as follows:
	\begin{align*}
		&\frac{1}{\qi[\omega]{2n-k}}
			\qbinom[\omega]{3n-2k-1}{2n-k-1}\\
		&=\frac{1}{\qi[\omega]{2n-k}}
			\frac{\qi[\omega]{3n-2k-1}\cdots\qi[\omega]{n-k+1}}{\qi[\omega]{2n-k-1}\cdots\qi[\omega]{1}}\\
		&=\frac{1}{\qi[\omega]{2n-k}}
			\frac{1}{\qi[\omega]{2n-k-1}}
			\frac{\qi[\omega]{3n-2k-1}\cdots\qi[\omega]{n-k+2}}{\qi[\omega]{2n-k-2}\cdots\qi[\omega]{1}}
			\frac{\qi[\omega]{n-k+1}}{1}\\
		&=\frac{\qi[\omega]{n-k+1}}{\qi[\omega]{2n-k}\qi[\omega]{2n-k-1}}
			\frac{\qi[\omega]{3n-2k-1}\cdots\qi[\omega]{n-k+2}}{\qi[\omega]{2n-k-2}\cdots\qi[\omega]{1}}\\
	\end{align*}
	
	In the second fraction the factors above each other have all the same remainder modulo $d$. Hence, by Lemma~\ref{lem:qFacts}, this fraction is non-zero. In the first fraction we evaluate the factors separately:
	$$\qi[\omega]{n-k+1}=\qi[\omega]{n-(k-1)}=0$$
	$$\qi[\omega]{2n-k}=1$$
	$$\qi[\omega]{2n-k-1}
	\begin{cases}
	\text{$=0$, if $d\mid k+1$}\\
	\text{$\ne 0$ else}\\
	\end{cases}$$
	Altogether we have $f_{n,k}(\omega)\ne 0$, if and only if $d\mid k-1$ and $d\mid k+1$ which is only possible if $d=2$. In this case
	$$f_{n,k}(\omega)=\binom{\frac{n}{2}}{\frac{k-1}{2}}\binom{\frac{3n-2k-2}{2}}{\frac{n-k-1}{2}}$$
	as desired.
\end{proof}

\section{The right hand side: counting forests invariant under rotation}

\begin{lem}
\label{lem:disjoint}
	Let $\Phi\in\Set F_{n,k}^d$ and let $\rho$ be the rotation by $\frac{2\pi}{d}$ of $\Phi$. Then $\rho$ maps every tree $\tau$ in $\Phi$ onto a copy of itself, the vertices being disjoint from those of $\tau$, or onto itself. The latter case only occurs if $d=2$ and $k$ is odd.
\end{lem}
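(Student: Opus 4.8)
The plan is to analyze how the rotation $\rho$ by $\frac{2\pi}{d}$ interacts with the tree structure. First I would observe that since $\Phi$ is $d$-invariant, $\rho$ permutes the set of trees $\{\tau_1,\dots,\tau_k\}$ of $\Phi$: indeed $\rho(\Phi)=\Phi$ and $\rho$ maps each connected component to a connected component, so $\rho$ induces a permutation of the components, and each component is a non-crossing tree which $\rho$ carries to a congruent non-crossing tree. So the question reduces to: for a fixed tree $\tau$, can the orbit $\{\tau,\rho(\tau),\rho^2(\tau),\dots\}$ fail to consist of vertex-disjoint trees without being a singleton orbit? If the orbit is not a singleton, I claim the trees in it are pairwise vertex-disjoint. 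Suppose $\tau$ and $\rho^j(\tau)$ (for some $0<j<$ orbit length) share a vertex $v$. Then $\tau$ contains both $v$ and $\rho^{-j}(v)$ — wait, more carefully: since $\Phi$ is a forest, distinct trees are vertex-disjoint, so if $\rho^j(\tau)\neq\tau$ then they are automatically vertex-disjoint as components of a forest. So the real content is the dichotomy: either $\rho^j(\tau)=\tau$ for the smallest positive $j$ that is possible, i.e. $\tau$ is itself $\rho$-invariant (or invariant under some power), or the whole orbit has full length $d$ and consists of $d$ vertex-disjoint congruent copies.

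So the key step is to rule out a tree $\tau$ being invariant under rotation by $\frac{2\pi}{e}$ for $e\ge 3$ (where $e=d/\gcd(j,d)$ is the period), and to show that $e=2$ forces $k$ odd. For the first part, suppose $\tau$ is invariant under rotation by $\frac{2\pi}{e}$ with $e\ge 2$. Then the vertex set of $\tau$ is invariant under this rotation, hence is a union of full orbits of the rotation acting on the $n$ circle positions, so $|V(\tau)|$ is a multiple of $e$ — but wait, a single position could be fixed only if it's the center, which isn't a circle vertex, so every orbit of the rotation on the $n$ vertices has size exactly $e$ (since $e\mid n$ when $e=d/\gcd$... I should double-check $e\mid n$: we have $d\mid n$, but $e\mid d$ so $e\mid n$, good), meaning $|V(\tau)|=me$ for some $m\ge 1$. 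Now $\tau$ is a tree on $me$ vertices, so it has $me-1$ edges. The rotation of period $e$ permutes these edges in orbits of size $e$ (an edge fixed by the rotation would have to be a diameter, possible only if $e=2$). If $e\ge 3$, then $e\mid me-1$, impossible. If $e=2$, a fixed edge must be a diameter; since a non-crossing graph can contain at most one diameter (two distinct diameters cross), $\tau$ has exactly one edge fixed (a diameter) and the rest in $2$-orbits, so $me-1=2\cdot2-1$... let me redo: $2m-1$ edges, one fixed, so $2m-2$ in $2$-orbits, which works for any $m$. Good, so $e=2$ is genuinely possible and $e\ge 3$ is not.

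The final step handles the global constraint for the $e=2$ case. If some tree $\tau$ is its own image under rotation by $\frac{2\pi}{d}$ (so that we're in the "latter case"), then by the above, $d$ has a divisor $e=2$ realized as the period, but actually I need $\rho$ itself (not a power) to fix $\tau$, so the period of $\tau$ under $\rho$ divides... hmm — re-reading the statement, "$\rho$ maps $\tau$ onto itself" means $\rho(\tau)=\tau$, i.e. period $1$ under $\rho$, i.e. $\tau$ is invariant under rotation by $\frac{2\pi}{d}$ itself. By the edge-counting argument applied with $e=d$: if $d\ge 3$ this is impossible, so $d=2$. And when $d=2$, the diameter edge of $\tau$ is the unique diameter available in the whole non-crossing forest $\Phi$, so $\tau$ is the \emph{unique} $\rho$-invariant tree; all other trees of $\Phi$ are swapped in pairs with their disjoint mirror images, contributing an even number of trees, so $k=1+(\text{even})$ is odd. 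I expect the main obstacle to be the bookkeeping around "period of the orbit" — cleanly reducing from a general power $\rho^j$ fixing $\tau$ to a rotation of order $e=d/\gcd(j,d)$ and verifying $e\mid n$, $e\mid|V(\tau)|$, together with the non-crossing fact that at most one diameter can appear — but each of these is short once set up correctly.
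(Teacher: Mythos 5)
Your argument is essentially sound, and in fact the paper offers no proof of this lemma at all (it is stated as evident and only used later via Lemma~\ref{lem:size}), so there is nothing in the text to compare against except the statement itself. Your route --- $\rho$ permutes the components of $\Phi$, distinct components of a forest are automatically vertex-disjoint, and an orbit/edge count rules out a $\rho$-invariant tree unless $d=2$, with uniqueness of the diameter forcing exactly one invariant tree and hence $k$ odd --- is a correct and complete way to supply the missing proof. The reduction to the case $e=d$ at the end is also the right reading of the statement: only $\rho(\tau)=\tau$ matters, not invariance under proper powers.

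One small imprecision to patch: for even $d\ge 4$ your parenthetical ``an edge fixed by the rotation would have to be a diameter, possible only if $e=2$'' rules out edges fixed by the generator $\rho$, but an edge orbit under $\langle\rho\rangle$ can still have size $d/2$ if the edge is a diameter stabilized by $\rho^{d/2}$, so ``all orbits have size $d$'' does not follow verbatim. Two one-line fixes are available, both already within your toolkit: (i) if a $\rho$-invariant tree contained a diameter $D$, then $\rho(D)\ne D$ would be a second diameter of the same non-crossing tree, and two distinct diameters cross --- so no diameters occur when $d\ge 3$ and all edge orbits do have size $d$; or (ii) keep the possible orbits of sizes $d$ and $d/2$ and observe that the edge count $md-1\equiv -1\pmod{d/2}$ is still not attainable when $d/2\ge 2$. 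With either patch the contradiction $d\mid md-1$ (or its mod $d/2$ variant) goes through, and the rest of your argument, including the parity count giving $k$ odd, is correct. (Implicitly, of course, the lemma is read with $d\ge 2$, since for $d=1$ the rotation is the identity.)
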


\subsection*{The case where $d$ divides $k$}

\begin{dfn}
\label{dfn:distance}
	For two vertices $u$ and $v$ the \Dfn{distance $\ell(u, v)$ from $u$ to $v$} is the number of consecutive vertices when going clockwise from $u$ to $v$. Equivalently, when the vertices are labelled clockwise from $1$ to $n$, the distance is the unique number in $\{1,2,\dots,n\}$ which is congruent to $v-u+1\bmod n$.
\end{dfn}

\begin{dfn}
\label{dfn:navigation}
	Let $u$ and $v$ be vertices with $\ell(u, v)<\ell(v, u)$. Then \Dfn{$u$ is in front of $v$} and \Dfn{$v$ is behind $u$}. In particluar, if $\ell(u,v)=1$ \Dfn{$u$ is the predecessor of $v$} and \Dfn{$v$ is the successor of $u$}
\end{dfn}

\begin{dfn}
\label{dfn:goodVertex}
Let us label the vertices of a forest in $\Set F_{n',k'}$ clockwise from $1$ to $n'$ beginning with the base vertex. For any tree not containing the base vertex $1$ we call the vertex with minimal label a \Dfn{bad vertex}. All $n'-k'+1$ other vertices are \Dfn{good}.
\end{dfn}

\begin{lem}
\label{lem:size}
  Let $\tau$ be a tree in a forest $\Phi\in\Set F_{n,k}^d$ and let $\rho(\tau)$ be its image under rotation by $\frac{2\pi}{d}$. Then there is a unique vertex $v\in\tau$ such that there are no vertices of $\tau$ between $v$ and any vertex of $\rho(\tau)$. Subsequently, $v$ is called the \Dfn{last vertex} of $\tau$.

 Similarly, let $\rho^{-1}(\tau)$ be the preimage of $\tau$ under rotation by $\frac{2\pi}{d}$. Then there is a unique vertex $u\in\tau$ such that there are no vertices of $\tau$ between any vertex of $\rho^{-1}(\tau)$ and $u$. Subsequently, $u$ is called the \Dfn{first vertex of $\tau$}.

	Thus, between the first and the last vertex of a tree $\tau$ there cannot be any vertex of an image of $\tau$. In particular, the number of components of $\Phi$ is a multiple of $d$ and the distance between the first and the last vertex of any tree $\tau$ is at most $n'=\frac{n}{d}$.
\end{lem}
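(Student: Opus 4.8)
The plan is to prove Lemma~\ref{lem:size} by carefully analyzing how the rotation $\rho$ permutes the vertices of the circle and exploiting the non-crossing condition on $\Phi$. First I would set up coordinates: label the vertices $1,\dots,n$ clockwise, so that $\rho$ sends vertex $i$ to vertex $i + n/d$, and identify a tree $\tau$ with its vertex set $V(\tau) \subseteq \{1,\dots,n\}$, with $\rho(\tau), \rho^2(\tau),\dots$ being the images under the group action. By Lemma~\ref{lem:disjoint}, when $d$ does not fall into the exceptional case, $\tau, \rho(\tau),\dots,\rho^{d-1}(\tau)$ are pairwise vertex-disjoint, and their union (the full $\rho$-orbit of $\tau$) is $\rho$-invariant; the exceptional case $d=2$, $k$ odd is handled separately or noted as the place where "onto itself" can occur, but the conclusion about first/last vertices still makes sense there.

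The key observation is that because $\tau$ and $\rho(\tau)$ are disjoint non-crossing trees inside the same non-crossing forest, they cannot interleave around the circle: the vertices of $\tau$ must occupy a contiguous arc when restricted to the orbit $V(\tau) \cup V(\rho(\tau)) \cup \cdots$. More precisely, I would argue that the cyclic order of the vertices of the orbit is $\tau$-block, then $\rho(\tau)$-block, then $\rho^2(\tau)$-block, and so on — otherwise two of these translate-trees would have to cross (here one uses that $\tau$ is connected and that the edges are chords that do not cross). Granting this block structure, the "last vertex" $v$ of $\tau$ is simply the clockwise-last vertex of $\tau$'s block: the next vertex clockwise belongs to $\rho(\tau)$, so there is no vertex of $\tau$ between $v$ and any vertex of $\rho(\tau)$, and $v$ is clearly unique with this property. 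Symmetrically the "first vertex" $u$ is the clockwise-first vertex of $\tau$'s block, with $\rho^{-1}(\tau)$'s block immediately preceding it.

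From the block structure the remaining assertions follow quickly. Since no vertex of any $\rho^j(\tau)$ lies inside $\tau$'s arc from $u$ to $v$, in particular no vertex of $\rho(\tau)$ does, giving the "between first and last" statement. The orbit of $\tau$ under $\langle\rho\rangle$ has size $d$ (in the non-exceptional case), and since every tree of $\Phi$ lies in such an orbit, the number $k$ of components is a multiple of $d$; this is also consistent with Lemma~\ref{lem:qEval}. Finally, the $d$ disjoint arcs (one per translate of $\tau$) together with the arcs of the other orbits partition the $n$ vertices into $d$ congruent sectors of $n' = n/d$ vertices each, so the arc of $\tau$ from $u$ to $v$ fits inside a single sector, i.e. $\ell(u,v) \le n'$.

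The main obstacle I expect is making the "block structure" claim fully rigorous — i.e. proving that disjoint cyclic translates of a connected non-crossing subgraph inside a non-crossing forest must appear in contiguous, cyclically-ordered arcs rather than interleaved. The natural approach is: if vertex $a \in \tau$ and vertex $b \in \tau$ were separated on the circle by a vertex $c \in \rho(\tau)$ with another vertex $c' \in \rho(\tau)$ on the other side, then the path in $\tau$ from $a$ to $b$ and the path in $\rho(\tau)$ from $c$ to $c'$ consist of chords, and a parity/Jordan-curve argument forces two of these chords to cross, contradicting the non-crossing hypothesis. I would also need to treat the exceptional case $d=2$, $k$ odd of Lemma~\ref{lem:disjoint} with a little care (a centrally symmetric tree mapped to itself), noting that there the "first" and "last" vertices still exist for the non-self-symmetric trees and the self-symmetric tree can be singled out, but since the lemma is mainly used when $d \mid k$ I would keep that discussion brief.
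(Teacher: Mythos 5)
Your proposal is correct and follows essentially the same route as the paper, whose proof of Lemma~\ref{lem:size} is simply the observation that it ``follows directly from Lemma~\ref{lem:disjoint} together with the fact that $\Phi$ is non-crossing''; your non-interleaving/block-structure argument is just a fleshed-out version of that one-line appeal to disjointness plus the non-crossing property. The extra detail you supply (the crossing-paths argument and the bound $\ell(u,v)\le n'$ via the position of $\rho(u)$) is exactly what the paper leaves implicit.
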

\begin{proof}
  Follows directly from Lemma~\ref{lem:disjoint} together with the fact that $\Phi$ is non-crossing.
\end{proof}

\begin{lem}
\label{lem:numInvForests}
	Suppose $d\mid n$. Let $n'=\frac{n}{d}$ and $k'=\frac{k}{d}$. If $d$ divides $k$ the number of non-crossing forests with $n$ vertices and $k$ components invariant under rotation by $\frac{2\pi}{d}$ is $(n'-k'+1)f_{n',k'}$.
\end{lem}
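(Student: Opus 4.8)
The plan is to set up a bijection between $\Set F_{n,k}^d$ (the $d$-invariant forests, with $d\mid k$) and the set of pairs consisting of a forest $\Phi'\in\Set F_{n',k'}$ together with a choice of a good vertex in $\Phi'$, in the sense of Definition~\ref{dfn:goodVertex}. Since there are exactly $n'-k'+1$ good vertices in any such $\Phi'$, this would give $\size{\Set F_{n,k}^d}=(n'-k'+1)f_{n',k'}$ as claimed. The guiding idea is that a $d$-invariant forest is determined by a ``fundamental domain'' consisting of roughly one $d$-th of its trees, which should assemble into an object close to a forest on $n'$ vertices; the subtlety is that the trees can straddle the boundary between consecutive fundamental domains, and the extra marked good vertex is exactly the bookkeeping needed to record how to cut along that boundary.

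The key steps, in order, would be: (1) Given $\Phi\in\Set F_{n,k}^d$, use Lemma~\ref{lem:disjoint} and Lemma~\ref{lem:size} to organize its $k$ trees into $d$ rotation-orbits of size $k'$ each, and to identify, for each tree, its \emph{first} and \emph{last} vertex; by Lemma~\ref{lem:size} the arc from the first to the last vertex of any tree has distance at most $n'$ and contains no vertex of any rotated copy. (2) Choose a distinguished arc of $n'$ consecutive vertices — a fundamental domain $D$ — so that it is compatible with the tree structure (e.g. starting just before the first vertex of a suitable tree); collapse the $n$ vertices onto $n'$ by identifying each vertex with the unique one in $D$ in its rotation-orbit, and check that the trees meeting $D$ project to a genuine non-crossing forest $\Phi'$ on $n'$ vertices. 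Here a tree of $\Phi$ that wraps across the right boundary of $D$ becomes, in $\Phi'$, a tree not containing the base vertex, and its minimal-label (bad) vertex together with the choice of where the boundary fell records the missing data — which I would encode as a marked good vertex. (3) Conversely, given $(\Phi',v)$ with $v$ good, reverse the construction: make $d$ rotated copies of $\Phi'$ arranged around the $n$-circle, then ``reconnect'' consecutive copies at the marked vertex $v$ to glue the boundary-trees back together, producing a $d$-invariant forest with exactly $k=dk'$ components. (4) Verify the two constructions are mutually inverse, and that the non-crossing property is preserved in both directions — this last check leans on the planarity statements in Lemmas~\ref{lem:disjoint}–\ref{lem:size}.

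The main obstacle I expect is step (2)–(3): precisely defining the projection and its inverse so that (a) the number of components comes out to exactly $k$ (not $k\pm$ something depending on how many trees straddle the boundary), and (b) the ``good vertex'' data is exactly right — neither too much nor too little information — to make the correspondence a bijection. In particular one must be careful that a tree straddling the boundary contributes the same number of components after cutting as before, which is why the cut point must be chosen inside a single tree and why bad vertices (the minimal-label vertices of non-base trees) must be excluded as possible marks. Once the bijection is pinned down, the count $n'-k'+1$ falls out immediately from Definition~\ref{dfn:goodVertex}.
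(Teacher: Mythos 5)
Your overall strategy is exactly the paper's: both construct a bijection between $\Set F_{n,k}^d$ and pairs $(\phi,v)$ with $\phi\in\Set F_{n',k'}$ and $v$ a good vertex in the sense of Definition~\ref{dfn:goodVertex}, and then count the $n'-k'+1$ good vertices per forest. The one place where your picture diverges --- and where the gap sits --- is the treatment of trees near the boundary of the fundamental domain. You propose to let a tree straddle the boundary, cut it, and later ``reconnect consecutive copies \ldots to glue the boundary-trees back together''; taken literally, gluing would merge components and break the count $k=dk'$, which is exactly the difficulty you flag at the end without resolving. The paper avoids cutting altogether: Lemma~\ref{lem:exist} shows that every $d$-invariant forest admits a window of $n'$ consecutive vertices, containing the base vertex, such that \emph{no} edge joins a vertex inside the window to one outside (one finds it by starting from the tree whose edge crosses the radius to the base vertex and repeatedly extending to the next tree, using Lemmas~\ref{lem:disjoint} and~\ref{lem:size} to bound the length by $n'$). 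Taking the \emph{first} such window in front of the base vertex, the window contains exactly one tree from each of the $k'$ rotation orbits, the inverse map is pure concatenation of $d$ copies of the list with no reconnection step, and the marked good vertex records only where the window starts relative to the base vertex --- the minimality of that starting vertex is what makes the decomposition canonical and forces it to be good. So your target bijection and final count are right, but to close the argument you need this existence-of-an-uncut-window lemma in place of the cut-and-glue step.
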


To prove this we will show that the following mapping is in fact a bijection.

\begin{map}
\label{map:dBijection}
Suppose $d\ge2$ and $d$ divides $k$ and let $n'=\frac{n}{d}$, $k'=\frac{k}{d}$. Abusing notation, let $\Set F_{n,k}\times G$ be the set of pairs $(\phi,v)$ such that $\phi\in\Set F_{n,k}$ and $v$ is a good vertex in $\phi$.

{\bf Construction:} The function $C_d:\Set F_{n',k'}\times G\rightarrow\Set F_{n,k}^d$ maps $(\phi,v)\in\Set F_{n',k'}\times G$ onto a $d$-invariant forest $\Phi\in\Set F_{n,k}^d$ as follows:

We arrange the vertices of $\phi$ in a list beginning with the vertex $v$ such that the predecessor of $v\in\phi$ is the last vertex in the list. Then we construct a new list on $n$ vertices by placing $d$ copies of this list next to each other such that for $1\le i<d$ 
the first vertex in the $(i+1)$\textsuperscript{st} sublist is the right neighbour of the last vertex in the $i$\textsuperscript{th} sublist.

Now, we arrange the vertices of this list on a circle such that the first vertex of the list becomes the successor of the last vertex in the list. Finally, any of the vertices corresponding to the base vertex of $\phi$ becomes the base vertex of the so constructed $d$-invariant forest.

{\bf Decomposition:} the function $D_d:\Set F_{n,k}^d\rightarrow\Set F_{n',k'}\times G$ maps a $d$-invariant forest $\Phi\in\Set F_{n,k}^{d}$ onto a pair $(\phi,v)\in\Set F_{n',k'}\times G$ as follows:

We choose the list of $n'$ consecutive vertices in $\Phi$ beginning with the first vertex $v$ in front of the base vertex such that there is no edge in $\Phi$ connecting a vertex in this list with a vertex not in this list. Lemma~\ref{lem:exist} shows that such a vertex exists and that the base vertex is in this list.

We then arrange the vertices of this list on a circle. The vertex corresponding to the base vertex in $\Phi$ becomes the base vertex in the constructed forest.
\end{map}

%
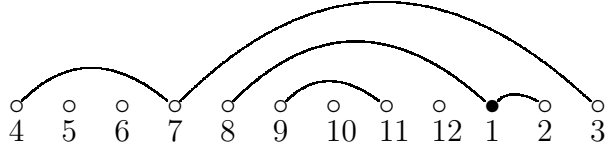
\begin{figure}
  \setlength{\unitlength}{20pt}
  \begin{picture}(10,4)(0,-1)
    \put(0,-0.5){\hbox{$4$}}\put(0,0){\o}
    \put(1,-0.5){\hbox{$5$}}\put(1,0){\o}
    \put(2,-0.5){\hbox{$6$}}\put(2,0){\o}
    \put(3,-0.5){\hbox{$7$}}\put(3,0){\o}
    \put(4,-0.5){\hbox{$8$}}\put(4,0){\o}
    \put(5,-0.5){\hbox{$9$}}\put(5,0){\o}
    \put(6,-0.5){\hbox{$10$}}\put(6,0){\o}
    \put(7,-0.5){\hbox{$11$}}\put(7,0){\o}
    \put(8,-0.5){\hbox{$12$}}\put(8,0){\o}
    \put(9,-0.5){\hbox{$1$}}\put(9,0){\x}
    \put(10,-0.5){\hbox{$2$}}\put(10,0){\o}
    \put(11,-0.5){\hbox{$3$}}\put(11,0){\o}
    \qbezier(0.25,0.25)(1.5,1.5)(3,0.25)
    \qbezier(3.25,0.25)(7.0,4)(11,0.25)
    \qbezier(4.25,0.25)(6.5,2.5)(9,0.25)
    \qbezier(5.25,0.25)(6,1)(7,0.25)
    \qbezier(9.25,0.25)(9.5,0.5)(10,0.25)
  \end{picture}
  \caption{The non-crossing forest of Figure~\ref{fig:f2} drawn as a
    list with good vertex $4$.}
  \label{fig:l2}
\end{figure}

\begin{figure}
  \centering
\begin{tikzpicture}[auto]
  \coordinate (allo) at (0,0);
  \ngon{24}{(allo)}{2.5}
  {1/2/black,3/19/black,4/7/black,7/15/black,8/13/black,9/11/black,13/14/black,16/19/black,20/1/black,21/23/black}
  {2}{1/\x,2/\o,3/\o,4/\o,5/\o,6/\o,7/\o,8/\o,9/\o,10/\o,11/\o,12/\o,
13/\o,14/\o,15/\o,16/\o,17/\o,18/\o,19/\o,20/\o,21/\o,22/\o,23/\o,24/\o}
  {first}{105}
\end{tikzpicture} 
\caption{The image of the non-crossing forest of Figure~\ref{fig:f2}
  under $C_2$ with good vertex $4$.}
  \label{fig:F2}
\end{figure}
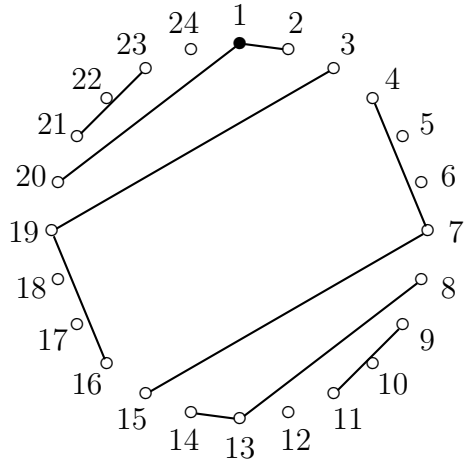

\begin{lem}
\label{lem:exist}
$D_d$ is well defined.
\end{lem}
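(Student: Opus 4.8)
The goal is to show that $D_d$ is well defined, meaning: for any $d$-invariant forest $\Phi\in\Set F_{n,k}^d$ (with $d\mid k$, $d\ge 2$), there exists a list of $n'$ consecutive vertices starting at the first vertex $v$ in front of the base vertex such that no edge of $\Phi$ joins a vertex in the list to a vertex outside it, and moreover the base vertex lies in this list; in addition, the resulting forest on $n'$ vertices lies in $\Set F_{n',k'}$ and the chosen starting vertex is \emph{good}. The plan is to extract this list out of the structure provided by Lemma~\ref{lem:size}.

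\medskip

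\textbf{Plan.} First I would recall from Lemma~\ref{lem:size} that every tree $\tau$ of $\Phi$ has a well-defined \emph{first vertex} and \emph{last vertex}, that no vertex of a rotated copy $\rho^{\pm1}(\tau)$ lies strictly between them, and that the number of components is a multiple of $d$. The key observation is that the first and last vertices partition the circle: going clockwise, the arc from the first vertex of a tree $\tau$ to its last vertex contains exactly the $n'$ vertices "owned" by one fundamental domain, and the $d$ rotated images of this arc tile the whole circle of $n$ vertices without overlap. Concretely, I would define the candidate list to be the $n'$ consecutive vertices starting at $v$, the first vertex in front of the base vertex. I must check (i) that $v$ is itself the first vertex of the tree containing it (or, more carefully, that starting at $v$ and reading off $n'$ consecutive vertices produces exactly a set closed under edges), (ii) that the base vertex falls inside this window, and (iii) that no edge crosses the window boundary.

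\medskip

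For (iii) — that no edge leaves the window — the argument is: if an edge $\{a,b\}$ had $a$ inside the window and $b$ outside, then since $\Phi$ is non-crossing and the window has width exactly $n'$, the edge together with its $d$ rotated copies would have to cross (this is essentially where Lemma~\ref{lem:disjoint}'s dichotomy and the non-crossing property do the work: a tree cannot "reach around" past a full rotated copy of itself without a crossing, and by Lemma~\ref{lem:size} any tree fits within an arc of length $\le n'$). So every tree of $\Phi$ is entirely contained in exactly one of the $d$ rotated windows, and the window starting at $v$ captures a full set of $k'=k/d$ trees on $n'$ vertices, giving a genuine non-crossing forest in $\Set F_{n',k'}$ after placing those $n'$ vertices on a circle. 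For (ii), the base vertex lies in the first window by construction, since $v$ is defined as the first vertex *in front of* the base vertex, and the base vertex is then within distance $n'$ clockwise of $v$ by the tiling. For the "good vertex" claim, I would check that $v$, now relabelled among $1,\dots,n'$ with the base vertex labelled $1$, is not the minimal-labelled vertex of a component other than the one containing the base vertex — this follows because $v$ is the first vertex of its tree, hence the base vertex (if in a different component) precedes nothing problematic, and one traces through Definition~\ref{dfn:goodVertex}.

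\medskip

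\textbf{Main obstacle.} The delicate point is pinning down exactly \emph{which} vertex $v$ to start from and proving the window it generates is canonically determined — i.e. that "the first vertex in front of the base vertex such that no edge leaves the $n'$-window" is unique and well-defined, rather than merely existing. I expect the crux is a careful non-crossing argument showing that the boundaries of the $d$ fundamental domains are forced: once we know each tree sits in an arc of length $\le n'$ and the images tile the circle, the only freedom is a global rotation by a multiple of $n'$, and anchoring at the base vertex removes it. Handling the interaction with the base vertex (ensuring it lands in the right copy and that $v$ comes out good) is fiddly but routine once the tiling picture is established. I would organize the write-up as: (1) invoke Lemma~\ref{lem:size} to get the first/last vertices and the tiling; (2) show the $n'$-window from $v$ is edge-closed via non-crossing; (3) verify the base vertex is inside; (4) verify $v$ is good.
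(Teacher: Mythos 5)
There is a genuine gap: the central existence claim is never actually proved, and the ``tiling'' picture you base it on is false as stated. You assert that for a tree $\tau$ the arc from $\first(\tau)$ to $\last(\tau)$ contains \emph{exactly} $n'$ vertices and that its $d$ rotated images tile the circle. Lemma~\ref{lem:size} only gives \emph{at most} $n'$; a single tree's arc is typically much shorter (a singleton tree has an arc of one vertex, and in the forest of Figure~\ref{fig:F2} the tree $\{9,11\}$ spans $3$ of the $12$ vertices of a fundamental domain). So a fundamental window cannot in general be read off from one tree --- it has to be assembled from several trees, and that assembly is exactly the content you are missing. Likewise your anchoring is not pinned down: ``the $n'$ consecutive vertices starting at the first vertex in front of the base vertex,'' read without the edge-closure condition, is just a window starting near the base vertex, and such a window need not be edge-closed (take $n=4$, $k=2$, $d=2$, edges $\{1,2\}$ and $\{3,4\}$: the width-$2$ window starting at the predecessor of the base vertex is cut by both edges). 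Your step (iii) --- ``an edge leaving the window would force a crossing among rotated copies'' --- is false for an arbitrary window and is only true for a correctly chosen one, which is precisely what has to be exhibited; you yourself flag this as the unresolved ``main obstacle,'' so the proposal is a plan rather than a proof.

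The paper closes this gap constructively: draw the radius $r$ from the center to the base vertex; if $r$ crosses edges, take the tree $\tau$ whose edge crosses $r$ closest to the center (otherwise the tree containing the base vertex) and start the list at $\first(\tau)$, which is automatically a good vertex and whose arc contains the base vertex; then repeatedly append the tree $\sigma$ whose first vertex is the successor of the current end of the list. Disjointness of each appended tree from $\rho(\tau)$ (Lemmas~\ref{lem:disjoint} and~\ref{lem:size}) guarantees the list never exceeds $n'$ vertices, so the greedy extension terminates with an edge-closed list of exactly $n'$ consecutive vertices containing the base vertex and beginning at a good vertex. Some version of this explicit selection-and-extension argument (or an equivalent induction) is needed; the tiling you hoped to invoke is a consequence of it, not a substitute for it.
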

\begin{proof}
We need to prove that in every $\Phi\in\Set F_{n,k}$ a list of list of $n'=\frac{n}{d}$ consecutive vertices exists, such that no edge in $\Phi$ connects a vertex corresponding to a vertex in this list with a vertex not in this list.

We can construct such a list containing the base vertex as follows: Draw a straight line $r$ from the center of the circle to the base vertex. If $r$ crosses an edges in $\Phi$, then there is an unique tree $\tau$ containing the edge $e$ such that the crossing of $e$ and $r$ is the closest crossing to the center.

We then choose the list of consecutive vertices such that $\first(\tau)$ is the first vertex in the list and $\last(\tau)$ is the last vertex in the list. By construction the base vertex $1$ is between $\first(\tau)$ and $\last(\tau)$. Hence, the label of $\last(\tau)$ is smaller than the label of $\first(\tau)$ and thus $\first(\tau)$ is a good vertex.

If $r$ does not cross any edge, let $\tau$ be the tree containing the base vertex. In this case $\first(\tau)$ is automatically a good vertex.

By Lemma~\ref{lem:size}, the number of vertices in this list is at most $n'$.

If it is smaller than $n'$, choose the tree $\sigma$ in $\Phi$ where $\first(\sigma)$ is the successor of the vertex corresponding to the last vertex in the list. Since $\sigma$ and the image of $\tau$ under rotation are disjoint, the list beginning at $\first(\tau)$ and ending at $\last(\sigma)$ contains not more than $n'$ vertices.

Then we repeat the last step till our list contains $n'$ vertices.
\end{proof}

\begin{lem}
\label{lem:bijection}
$C_d$ is a bijection between $\Set F_{n,k}^{d}$ and $\Set F_{n',k'}\times G$.
\end{lem}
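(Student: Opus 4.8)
The plan is to verify that $C_d$ and $D_d$ are mutually inverse bijections. Since $D_d$ is already known to be well defined (Lemma~\ref{lem:exist}), the work splits into: (i) $C_d$ lands in $\Set F_{n,k}^d$; (ii) $D_d$ lands in $\Set F_{n',k'}\times G$; (iii) $C_d\circ D_d=\mathrm{id}$ on $\Set F_{n,k}^d$; (iv) $D_d\circ C_d=\mathrm{id}$ on $\Set F_{n',k'}\times G$. The one genuinely delicate part is (iv); everything else is bookkeeping with the distance function together with the observation that, since $\rho$ acts as the shift by $n'$ positions, any $n'$ consecutive vertices and their $d-1$ rotates automatically partition the $n$ vertices.

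For (i): the forest $C_d(\phi,v)$ has $dn'=n$ vertices and $dk'=k$ components by construction, it is $d$-invariant because rotation by $2\pi/d$ cyclically shifts the $d$ sublists, and the freedom in choosing which copy of the base vertex of $\phi$ becomes the base vertex is harmless precisely because the resulting labelled forest is $d$-invariant. That it is non-crossing follows because cutting the circular arrangement of $\phi$ at the gap before $v$ does not alter which pairs of edges interleave, so the edges of $\phi$ become a non-crossing system of arcs over the list; placing $d$ translated copies on disjoint sub-arcs of the big circle keeps each copy's edges confined to its own sub-arc, and arcs over disjoint sub-arcs of a circle cannot cross. For (ii): Lemma~\ref{lem:exist} already supplies the distinguished list, the goodness of its first vertex, and the fact that the base vertex lies on it; a restriction of a non-crossing forest is non-crossing, so the output $\phi$ is a genuine member of $\Set F_{n',k'}$ once its component count is checked, and that is handled by the decomposition in the next paragraph.

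For (iii): given $\Phi\in\Set F_{n,k}^d$, let $B$ be the distinguished $n'$-block chosen by $D_d$ and $\phi=\Phi|_B$. By $d$-invariance the rotates $B,\rho B,\dots,\rho^{d-1}B$ partition the vertices, and since no edge leaves $B$ the same holds for every $\rho^i B$; hence $\Phi$ is the disjoint union of the $d$ rotated copies of $\phi$, which already forces $k=dk'$ (finishing (ii)). Now $C_d(\phi,v)$ reassembles exactly these copies, cutting $\phi$ at the gap before $v$; since $v$ is, by construction of $D_d$, the first vertex of $B$, this gap is precisely the boundary between $\rho^{-1}B$ and $B$, and the base vertex is re-inserted in the position it occupied inside $B$. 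Hence $C_d(\phi,v)=\Phi$ on the nose.

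The delicate step is (iv). Starting from $(\phi,v)$ with $v$ a good vertex, form $\Phi=C_d(\phi,v)$; its $d$ sublists are visibly $n'$-blocks with no edges leaving them, and the base vertex of $\Phi$ lies inside the unique copy $B_1$ whose first vertex is the copy of $v$. Thus it only remains to show that $D_d$'s selection rule returns this same block and this same first vertex, i.e.\ that the ray from the centre of $\Phi$ to its base vertex, in the construction behind Lemma~\ref{lem:exist}, meets a tree $\tau$ with $\first(\tau)$ equal to the copy of $v$, and that no valid $n'$-block begins strictly between the copy of $v$ and the base vertex. This is exactly where the hypothesis that $v$ is a \emph{good} vertex — not the minimal-label vertex of any component of $\phi$ avoiding the base — enters: transported to $\Phi$, goodness says that the innermost edge of $\phi$ "shielding'' the base direction belongs to the tree whose first vertex is the copy of $v$, while any block starting strictly between the copy of $v$ and the base would be severed by that shielding edge. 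Making this correspondence precise — reconciling the geometric "innermost crossing of the central ray'' description used by $D_d$ with the combinatorial "minimal label / first vertex'' description of a good vertex — is the main obstacle; once it is in place, $D_d\circ C_d=\mathrm{id}$, and together with (iii) the two maps are inverse bijections.
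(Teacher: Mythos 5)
Your write-up correctly frames the task (inverse maps, well-definedness of both directions) and handles the routine parts, but it does not actually prove the statement: the direction $D_d\circ C_d=\mathrm{id}$ is explicitly left open. You identify the crucial claim — that for $\Phi=C_d(\phi,v)$ the selection rule of $D_d$ returns the copy of $v$, i.e.\ that no admissible $n'$-block starts strictly between the copy of $v$ and the base vertex — and then write that "making this correspondence precise \dots is the main obstacle; once it is in place\dots". That obstacle is exactly the content of the lemma, and the paper closes it with a short two-case argument that your proposal is missing: if $v$ lies in the tree of $\phi$ containing the base vertex, the claim is immediate; otherwise, since $v$ is a \emph{good} vertex, its tree $\tau$ contains a vertex of smaller label, so in $\Phi$ the copy of $\tau$ has vertices both in front of and behind the base vertex, and any list of $n'$ consecutive vertices beginning strictly between the copy of $v$ and the base vertex contains the vertices of $\tau$ behind the base vertex but not $v$ itself — hence some edge of $\Phi$ (along the path of $\tau$ to $v$) leaves that list, disqualifying it. Without this argument the proof is incomplete.

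A secondary point: you describe $D_d$'s selection rule in terms of the ray from the centre to the base vertex and its innermost crossing. That ray construction belongs only to the proof of Lemma~\ref{lem:exist} (existence of \emph{some} admissible block); the rule defining $D_d$ is purely combinatorial — take the \emph{first} vertex in front of the base vertex whose $n'$-block has no edge leaving it — and the verification of $D_d\circ C_d=\mathrm{id}$ should be run against that rule, as above, rather than against the geometric existence argument. Relatedly, for $D_d$ to land in $\Set F_{n',k'}\times G$ you also need the chosen first vertex to be good, which your item (ii) does not address.
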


\begin{proof}
We will prove that the $C_d$ and $D_d$ are compositional inverses of each other.

Let $\Phi\in\Set F_{n,k}^d$ and $D_d(\Phi)=(\phi,v)$. Then $C_d(D_d(\Phi))=\Phi$ since both the list in the decomposition and the list in the construction begin with $v$ and represent the same forest $\phi$.

Conversely, let $(\phi,v)\in\Set F_{n',k'}\times G$ and $C_d(\phi,v)=\Phi$. To prove that $D_d(C_d(\phi,v))=(\phi,v)$ we have to show that the vertex returned by the decomposition is $v$. By construction of $\Phi$ no edge connects a vertex in the list of $n'$ consecutive vertices beginning with a vertex corresponding to $v$ with a vertex not in this list. Thus, it remains to show that $v$ is the first vertex in front of the base vertex having this property.

This is certainly the case if $v\in\phi$ is in the tree containing the base vertex.

Else, if $v$ is in a tree $\tau$ not containing the base vertex, then $v$ is not the vertex with minimal label in $\tau$, since $v$ is a good vertex. Thus, $\tau$ contains vertices with smaller label than $v$, and $\tau$ contains vertices in front of and behind the base vertex. Since a list of $n'$ consecutive vertices beginning with a vertex between $v$ and the base vertex contains the vertices of $\tau$ which are behind the base vertex, a path in $\Phi$ connects a vertex in this list with $v$ which is not in the list. Hence, $v$ is the first vertex in front of the base vertex with the required property such that both the lists in the construction and in the decomposition are beginning with $v$ and representing the same forest $\phi$.
\end{proof}

\begin{proof}[Proof of Lemma~\ref{lem:numInvForests}]
We have a bijection between $\Set F_{n,k}^d$ and $\Set F_{n',k'}\times G$. Since for each of the $f_{n',k'}$ elements in $\Set F_{n',k'}$ we have $n'-k'+1$ good vertices, the number of elements in in $\Set F_{n,k}^d$ is $f_{n,k}^d=(n'-k'+1)f_{n',k'}$.
\end{proof}

\subsection*{The case where $d=2$ and $k$ odd}
\begin{lem}
\label{lem:num2InvForests}
Let $d=2$, $n$ even and $k$ odd and let $n'=\frac{n}{2}$ and $k'=\frac{k+1}{2}$. Then the number of non-crossing forests under rotation by $\pi$ is $(3n'-2k')f_{n',k'}$
\end{lem}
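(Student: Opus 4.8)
The plan is to mimic the strategy already used for the case $d \mid k$: set up an explicit bijection between $\Set F_{n,k}^2$ and a set of pairs $(\phi, v)$, where $\phi$ lives in some smaller forest set and $v$ ranges over an appropriate set of marked vertices whose cardinality works out to $3n'-2k'$. By Lemma~\ref{lem:disjoint}, when $d=2$ and $k$ is odd, exactly one tree $\tau_0$ of $\Phi \in \Set F_{n,k}^2$ is fixed setwise by the rotation by $\pi$ (it is the "central" tree), while the remaining $k-1$ trees are paired off into $\frac{k-1}{2} = k'-1$ mirror pairs. Rotation by $\pi$ acts on $\tau_0$ itself; since $\tau_0$ is a non-crossing tree invariant under the antipodal map, it must have an even number of vertices, and the antipodal symmetry either fixes an edge of $\tau_0$ (a "diameter" edge joining antipodal vertices) or no vertex/edge at all — one should first pin down this local structure, as it governs what the quotient object looks like.

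The key steps, in order, would be: (1) Use Lemma~\ref{lem:disjoint} and Lemma~\ref{lem:size} to show that cutting $\Phi$ along a suitable ray yields a list of $n'$ consecutive vertices carrying "half" of $\Phi$, namely the $k'-1$ trees from the mirror pairs plus one "half" of the central tree $\tau_0$. (2) Identify the quotient data: the half of $\tau_0$ together with the $k'-1$ full trees assembles into a non-crossing forest $\phi$ on $n'$ vertices; the natural count of components of $\phi$ is $k'$, so $\phi \in \Set F_{n',k'}$. (3) Determine exactly how much extra information is needed to reconstruct $\Phi$ from $\phi$: this is where the marked vertex $v$ (analogous to the good vertex) comes in, and the bijection must be arranged so that the number of valid choices of $v$ per forest $\phi$ is exactly $3n'-2k'$. (4) Check that the construction and decomposition are mutually inverse, exactly as in Lemma~\ref{lem:bijection}, using non-crossingness to guarantee well-definedness (the analogue of Lemma~\ref{lem:exist}). (5) Conclude that $|\Set F_{n,k}^2| = (3n'-2k') f_{n',k'}$, and observe that this matches the evaluation $\binom{n'}{k'-1}\binom{3n'-2k'}{n'-k'}$ from Lemma~\ref{lem:qEval} via the identity $(3n'-2k')f_{n',k'} = (3n'-2k') \cdot \frac{1}{2n'-k'}\binom{n'}{k'-1}\binom{3n'-2k'-1}{n'-k'} = \binom{n'}{k'-1}\binom{3n'-2k'}{n'-k'}$, using $\frac{3n'-2k'}{2n'-k'}\binom{3n'-2k'-1}{n'-k'} = \binom{3n'-2k'}{n'-k'}$.

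The main obstacle I anticipate is step (3): correctly identifying the combinatorial meaning of the factor $3n'-2k'$ and hence the right notion of "marked vertex." Unlike the $d \mid k$ case, where the $n'-k'+1$ good vertices have a transparent description, here the central tree $\tau_0$ is split by the axis of symmetry, so the marked data must encode both where along the circle the "fold" happens and possibly how the split half of $\tau_0$ attaches — the count $3n'-2k' = (2n'-k') + (n'-k')$ suggests it may be natural to think of it as (number of vertices of $\phi$ not isolated in a certain sense, roughly $2n'-k'$) plus (something like $n'-k'$), or alternatively to mark a vertex together with a choice of "side." Getting this decomposition to be a clean bijection, rather than an overcounting argument, is the delicate part; if a direct bijection proves awkward, a fallback is to count $|\Set F_{n,k}^2|$ by first summing over the isomorphism type of the central tree $\tau_0$ and the position of its symmetry axis, then counting the ways to fill in the two symmetric halves with $k'-1$ non-crossing trees each, reducing to known formulas for non-crossing forests in a disk.
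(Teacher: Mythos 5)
Your overall strategy matches the paper's: build a bijection between $\Set F_{n,k}^2$ and forests $\phi\in\Set F_{n',k'}$ carrying one of $3n'-2k'$ possible markings, and your arithmetic check in step (5) that $(3n'-2k')f_{n',k'}=\binom{n'}{k'-1}\binom{3n'-2k'}{n'-k'}$ is correct and worth spelling out. But the step you yourself flag as unresolved --- step (3), the combinatorial meaning of the factor $3n'-2k'$ --- is the entire content of the lemma, and both of your guesses (the splitting $(2n'-k')+(n'-k')$, or ``a vertex together with a choice of side'') point away from the construction that actually works. The correct decomposition is $3n'-2k'=n'+2(n'-k')$: the marking is either one of the $n'$ vertices of $\phi$, or one of the $n'-k'$ edges of $\phi$ together with one of its two endpoints.

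The geometry behind this is exactly the point you left open about the central tree $\tau_0$. A tree automorphism of order two with no fixed vertex must invert an edge, so $\tau_0$ always contains an edge joining two antipodal vertices, i.e.\ a diameter of the circle; your alternative ``fixes no vertex/edge at all'' cannot occur. The quotient $\phi$ is then obtained by keeping the closed half containing the base vertex (including \emph{both} diameter endpoints) and contracting the diameter edge to a single vertex $v$, which yields $k'$ components on $n'$ vertices. To invert this contraction one must record how the edges incident to $v$ in $\phi$ are to be redistributed between the upper and lower diameter endpoints; by non-crossingness they are split at a single break point in the clockwise order around $v$, so the extra datum is either ``no break'' (mark only $v$; all edges stay at the upper endpoint) or the incident edge at which the break occurs (mark that edge together with $v$). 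Summed over $\phi$ this gives $n'+\sum_v\deg(v)=n'+2(n'-k')$ markings, as required. Without this identification the bijection, and hence the count, is not established, and your fallback (summing over the isomorphism type of $\tau_0$) is likewise left undeveloped, so as it stands the proposal is a plan rather than a proof.
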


The proof is similar to the one of Lemma~\ref{lem:numInvForests}. We define a bijection between $\Set F_{n,k}^2$ where $k$ is odd and $\Set F_{n',k'}$ with additional information.

\begin{map}
Abusing notation let $\Set F_{n',k'}\times G$ be the set forests $\phi\in\Set F_{n',k'}$ with either one of the $n'$ vertices marked or one of the $n'-k'$ edges and an incident vertex marked.

{\bf Construction:} the function $C_2:\Set F_{n',k'}\times G\rightarrow\Set F_{n,k}^2$ maps a forest $\phi$ with coloring onto a $2$-invariant forest $\Phi$ as follows:

	We draw a diameter from top to bottom in a circle. Then we arrange the vertices of $\phi$ clockwise on the right hand side of the circle such that the marked vertex $v$ becomes the upper vertex of the diameter.

	We have to distinguish whether an edge is marked or not. If no edge is marked, we ignore the lower vertex of the diameter and draw all edges incident to $v$ in $\phi$ such that they are incident to the upper vertex of the diameter.
	
	In contrast, if an edge is marked, going clockwise starting at $v$ we draw all vertices adjacent to $v$ till we reach the vertex incident to the marked edge. This vertex becomes the first vertex adjacent to the lower vertex of the diameter. All consecutive vertices adjacent to $v$ become adjacent to the lower vertex of the diameter. Thus, the marked vertex is the first edge drawn incident to the lower vertex.
	
	Finally, the remainder of the forest is completed so that the
        result becomes symmetric.

{\bf Decomposition:} the function $D_2:\Set F_{n,k}^2\rightarrow\Set F_{n',k'}\times G$ maps a $2$-invariant forest $\Phi$ with odd number of components onto a forest $\phi\in\Set F_{n',k'}$ with coloring as follows:

	The diameter separates the circle into two segments. We call the side with the base vertex the right hand side. Then we disregard the vertices in the left hand side of $\Phi$ such that only the diameter and the right hand side remain. Consequently, we call the vertices of the diameter the upper vertex and the lower vertex. 

	Then going clockwise starting at the upper vertex we choose the first vertex adjacent to the lower vertex and mark the edge between the two vertices.

	Finally, we obtain a $\phi$ by contracting the diameter which becomes the marked vertex.
\end{map}

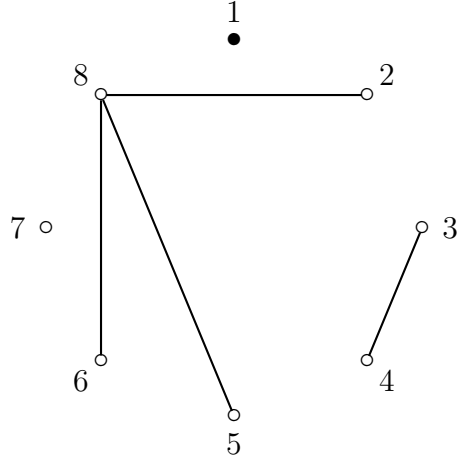
\begin{figure}
  \centering
\begin{tikzpicture}[auto]
  \coordinate (allo) at (0,0);
  \ngon{8}{(allo)}{2.5}
  {3/4/black,5/8/black,6/8/black,8/2/black}
  {2}{1/\x,2/\o,3/\o,4/\o,5/\o,6/\o,7/\o,8/\o}
  {first}{135}
\end{tikzpicture} 
\caption{A non-crossing forest on $n=8$ vertices with $k=3$
  components.}
  \label{fig:f3}
\end{figure}

\begin{figure}
  \centering
\begin{tikzpicture}[auto]
  \coordinate (allo) at (0,0);
  \ngon{16}{(allo)}{2.5}
  {3/4/black,5/16/black,6/16/black,8/16/black,8/10/blue,8/13/blue,8/14/blue,11/12/blue,16/2/black}
  {2}{1/\x,2/\o,3/\o,4/\o,5/\o,6/\o,7/\o,8/\o,9/\o,10/\o,11/\o,12/\o,13/\o,14/\o,15/\o,16/\o}
  {first}{112.5}
\end{tikzpicture} 
\caption{The image of the non-crossing forest of Figure~\ref{fig:f3}
  when marking vertex $8$.}
  \label{fig:F3a}
\end{figure}

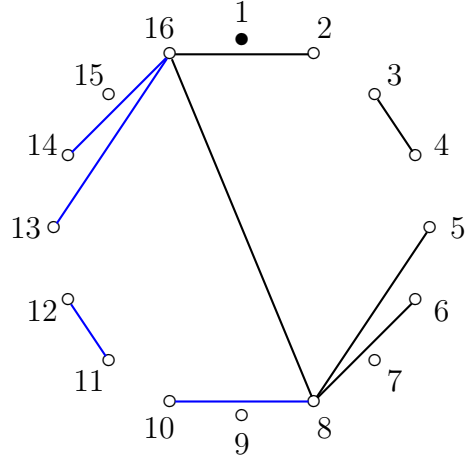
\begin{figure}
  \centering
\begin{tikzpicture}[auto]
  \coordinate (allo) at (0,0);
  \ngon{16}{(allo)}{2.5}
  {3/4/black,5/8/black,6/8/black,8/10/blue,8/16/black,11/12/blue,13/16/blue,14/16/blue,16/2/black}
  {2}{1/\x,2/\o,3/\o,4/\o,5/\o,6/\o,7/\o,8/\o,9/\o,10/\o,11/\o,12/\o,13/\o,14/\o,15/\o,16/\o}
  {first}{112.5}
\end{tikzpicture} 
\caption{The image of the non-crossing forest of Figure~\ref{fig:f3}
  when marking vertex $8$ and the edge $(5,8)$.}
  \label{fig:F3b}
\end{figure}

\begin{lem}
$C_2$ is a bijection $C_2$ and $D_2$ are compositional inverses of each other.
\end{lem}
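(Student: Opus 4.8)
The plan is to prove that $C_2$ and $D_2$ are well-defined maps which are two-sided inverses of one another; this immediately gives that $C_2$ is a bijection. I would split the work into well-definedness of $C_2$, well-definedness of $D_2$, and the two composition identities. For well-definedness of $C_2$, given $(\phi,v)\in\Set F_{n',k'}\times G$ I must check that the output $\Phi$ is (i) a non-crossing graph, (ii) acyclic, (iii) on $n$ vertices with exactly $k=2k'-1$ components, and (iv) invariant under rotation by $\pi$. Point (iv) is built into the symmetrization step. For (i), the only chords of $\Phi$ not inherited from a rotated copy of $\phi$ are the diameter edge joining the two diameter vertices, the contiguous fan of edges re-routed from $v$ to the lower vertex, and the mirror images of these; since all the re-routed edges stay inside the circular region freed up when $\phi$ is ``opened'' at $v$, and the diameter separates this region from its mirror, no crossing is created --- this is the geometric heart of the matter (compare Figures~\ref{fig:F3a} and~\ref{fig:F3b}). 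For (ii) and (iii), re-routing the fan splits the tree of $v$ into two subtrees which are rejoined through the single diameter edge, so, together with its mirror, what would be two disjoint mirror copies becomes one self-symmetric tree; acyclicity survives because every moved edge is simply re-attached at the freshly introduced lower vertex, and the number of components is therefore $2k'-1=k$.

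For well-definedness of $D_2$, let $\Phi\in\Set F_{n,k}^2$ with $k$ odd and let $\rho$ be rotation by $\pi$. Any $\rho$-fixed tree $T$ of $\Phi$ contains a unique diameter edge: the center of a tree is a vertex or an edge fixed by every automorphism, $\rho$ fixes no vertex of the $n$-gon (as $n$ is even), so the center of $T$ is an edge swapped by $\rho$, i.e.\ a diameter. Two distinct diameters cross, so there is at most one $\rho$-fixed tree, and since Lemma~\ref{lem:disjoint} puts all other trees into $\rho$-pairs, a parity count forces exactly one such $T$. Its endpoints are the upper and lower vertices, the diameter line cuts the circle into two sides, and I take the side carrying the base vertex as the right hand side --- fixing a convention for the case where the base vertex is itself an endpoint of the diameter edge. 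Discarding the left side and contracting the diameter edge leaves $n'$ vertices, and a routine check shows the result lies in $\Set F_{n',k'}$ with $k'=\frac{k+1}{2}$, and that the recorded marking --- the first right-hand-side neighbour of the lower vertex clockwise from the upper vertex, if such a neighbour exists --- has the prescribed form.

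For the composition identities I would argue as in the proof of Lemma~\ref{lem:bijection}. Starting from $(\phi,v)$ with its optional marked edge, $C_2$ lays $\phi$ on the right side with $v$ at the upper vertex and routes exactly the prescribed fan onto the lower vertex, so in $C_2(\phi,v)$ the clockwise list of right-hand neighbours of the lower vertex begins with the other endpoint of the marked edge (and is empty if no edge was marked); hence $D_2$ reads back exactly that marking, and contracting the diameter returns $v$ together with $\phi$, so $D_2\circ C_2=\mathrm{id}$. Conversely, $D_2(\Phi)$ records the right-hand side of $\Phi$ up to the contracted diameter together with the first neighbour of the lower vertex, which is precisely the data $C_2$ needs to rebuild the right side; the left side is then forced by $2$-invariance, so $C_2\circ D_2=\mathrm{id}$. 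I expect the main obstacle to be part (i): verifying rigorously that re-routing the fan and symmetrizing never produces a crossing, together with settling the boundary cases in the definition of the ``right hand side''.
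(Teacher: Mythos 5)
The paper states this lemma without any proof (it is invoked immediately in the proof of Lemma~\ref{lem:num2InvForests}), so there is no argument of the author's to compare yours against; judged on its own, your plan has the right shape and parallels the proof of Lemma~\ref{lem:bijection}. Your observation that the unique $\rho$-fixed tree must contain a (necessarily unique) diametrical edge --- via the centre of a tree being preserved by the involution $\rho$, which fixes no vertex of the $n$-gon, while two distinct diameters would cross --- is exactly the fact that the paper's description of $D_2$ tacitly assumes when it speaks of ``the diameter'', and your parity argument for the existence of exactly one $\rho$-fixed tree correctly sharpens Lemma~\ref{lem:disjoint}. The well-definedness checks you list for $C_2$ (the re-routed fan and the diametrical edge create no crossings; the two half-trees glued through that edge give $2k'-1=k$ components) are the right ones, and your geometric sketch can be made precise by the standard ``exactly one endpoint inside the arc'' argument.

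The one step you assert rather than prove is the one that carries the weight of $C_2\circ D_2=\mathrm{id}$: that $\phi$ together with the single marked edge is ``precisely the data $C_2$ needs to rebuild the right side''. Contracting the diameter forgets which neighbours of the contracted vertex were attached to the upper vertex $U$ and which to the lower vertex $L$, and the marking only records the clockwise-first right-hand-side neighbour of $L$; so you must show that in every $\Phi\in\Set F_{n,k}^2$ the right-hand-side neighbours of $L$ form a contiguous clockwise-final block, preceded by all right-hand-side neighbours of $U$ --- otherwise $C_2(D_2(\Phi))$ could route a different fan to the lower vertex than $\Phi$ actually has. This is where non-crossing together with the presence of the diametrical edge enters: if $U$ were adjacent to $x$ and $L$ to $y$ with $y$ strictly between $U$ and $x$ in clockwise order on the right half, the chords $Ux$ and $Ly$ would cross. (The companion fact, that no edge of $\Phi$ joins the two open half-circles because it would cross the diametrical edge, is what makes your ``routine check'' that $D_2(\Phi)\in\Set F_{n',k'}$ with the correct component count go through.) With these two observations inserted, your outline closes into a complete proof.
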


\begin{proof}[Proof of Lemma~\ref{lem:num2InvForests}]
	$C_2$ is a bijection between $\Set F_{n,k}^2$ and $\Set F_{n',k'}\times G$.	For all $\phi\in\Set F_{n',k'}$ we have $(3n'-2k')$ different colorings since we have $n'$ options to mark a vertex and $2(n'-k')$ options to mark an edge and an incident vertex. Hence, the number of of elements in $\Set F_{n,k}^2$ is $f_{n,k}^2=(3n'-2k')f_{n',k'}$
\end{proof}

\section{Polynomiality and nonnegativity of $f_{n,k}(q)$}
\label{sec:polynom}
We prove that $f_{n,k}(q)$ is a polynomial with nonnegative integer coeffients, as required in Definition~\ref{dfn:CSP}. Let $a(q)=\qbinom{n}{k-1}$, $b(q)=\qbinom{3n-2k-1}{n-1}$ and $c(q)=\qi{2n-k}$ so that $f_{n,k}(q)=\frac{a(q)b(q)}{c(q)}$ and let $\omega$ be a $d$\textsuperscript{th} primitive root of unity. Then $f_{n,k}(q)\in\mathbb{Q}\left[q\right]$, if for all $d\in\mathbb{N}$ the multiplicity of a zero at $\omega$ in $a(q)b(q)$ is greater or equal than in $c(q)$. Note that $c(q)$ can have only simple zeros by Lemma ~\ref{lem:qFacts}. As a result of Section~\ref{sec:qEval} we know that $f_{n,k}(\omega)$ is an integer if $d$ divides $n$, and in this case the multiplicity of a zero at $\omega$ is not smaller in $a(q)b(q)$ than in $c(q)$. Since $c(q)$ has a simple zero if and only if $d\mid 2n-k$, it remains to consider the case $d\centernot\mid n$ and $d\mid 2n-k$. In this case $d\centernot\mid k$ and $n\centernot\equiv k\bmod d$. Let $n=dq_n+r_n$ and $k=dq_r+r_k$ with $0<r_n,r_k<d$. If $r_k=1$ then $r_n\ne0$, $r_n\ne1$ and by $q$-Lucas Theorem
$$b(\omega)=\binom{\cdots}{\cdots}\qbinom[\omega]{r_n-r_k-1}{r_n-r_k}=0$$
If $r_k>1$
$$a(\omega)=\binom{\cdots}{\cdots}\qbinom{r_n}{r_k-1}=0\text{, if $r_n<r_k-1$}$$
$$b(\omega)=\binom{\cdots}{\cdots}\qbinom[\omega]{d-2}{d-1}=0\text{, if $r_n=r_k-1$}$$
$$b(\omega)=\binom{\cdots}{\cdots}\qbinom[\omega]{r_n-r_k-1}{r_n-r_k}=0\text{, if $r_n>r_k-1$}.$$
Thus, if $c(q)$ vanishes at least one of $a(q)$ and $b(q)$ vanish, too. Since $a(q)$ and $b(q)$ are polynomials in $\mathbb{N}\left[q\right]$ with symmetric, unimodal coefficient sequences, their product is as well symmetric and unimodal. Finally, $f_{n,k}(q)\in\mathbb{N}\left[q\right]$ follows from ~\cite[Proposition 10.1]{RSW}.

\providecommand{\cocoa} {\mbox{\rm C\kern-.13em o\kern-.07em C\kern-.13em
  o\kern-.15em A}}
\providecommand{\bysame}{\leavevmode\hbox to3em{\hrulefill}\thinspace}
\providecommand{\href}[2]{#2}


\begin{thebibliography}{1}

\bibitem{FlajoletNoy}
Philippe Flajolet, Marc Noy, \emph{Analytic Combinatorics of Non-Crossing Configurations},
	Discrete Mathmatics \textbf{204} (1999), no. 1-3, 203-229

\bibitem{Guo}
Alan Guo, \emph{Cyclic sieving phenomenon in non-crossing connected graphs},
  Electronic Journal of Combinatorics \textbf{18} (2011), \#P9

\bibitem{RSW}
Victor Reiner, Dennis Stanton, and Dennis White, \emph{The cyclic sieving
  phenomenon}, Journal of Combinatorial Theory, Series A \textbf{108} (2004),
  no.~1, 17--50, {10.1016/j.jcta.2004.04.009}.

\bibitem{Sagan}
Bruce Sagan, \emph{Congruence properties of $q$-analogs}, Advances in
  Mathematics \textbf{95} (1992), 127--143.

\bibitem{EC2}
Richard~P. Stanley, \emph{Enumerative combinatorics. {V}ol. 2}, Cambridge
  Studies in Advanced Mathematics, vol.~62, Cambridge University Press,
  Cambridge, 1999, With a foreword by Gian-Carlo Rota and appendix 1 by Sergey
  Fomin.

\end{thebibliography}
\end{document}